\newtheorem{theo}{Theorem}[section]
\newtheorem{lem}[theo]{Lemma}
\newtheorem{remark}[theo]{Remark}
\newcommand{\bb}[1]{\boldsymbol{\mathrm{#1}}}
\newcommand{\RR}{\mathbb{R}}
\newcommand{\xx}{\bb{x}}
\newcommand{\Aa}{\bb{A}}
\newcommand{\Bb}{\bb{B}}
\newcommand{\Cc}{\bb{C}}
\newcommand{\Xx}{\bb{X}}
\newcommand{\Yy}{\bb{Y}}
\newcommand{\Vv}{\bb{V}}
\newcommand{\Uu}{\bb{U}}
\newcommand{\Mm}{\bb{M}}
\newcommand{\Dd}{\bb{D}}
\newcommand{\Ii}{\bb{I}}
\newcommand{\Ww}{\bb{W}}
\newcommand{\Llambda}{\bb{\Lambda}}
\begin{document}

\title{Almost-commuting matrices are almost jointly diagonalizable}

\author{Klaus Glashoff and Michael M. Bronstein\\
\small Institute of Computational Science\\
\small Faculty of Informatics\\
\small Universit{\`a} della Svizzera Italiana (USI)\\
\small Lugano, Switzerland\\
}
\maketitle
\begin{abstract}
We study the relation between approximate joint diagonalization of self-adjoint matrices and the norm of their commutator, and show that almost commuting self-adjoint matrices are almost jointly diagonalizable by a unitary matrix.
\end{abstract}

\section{Introduction}

The study of almost commuting matrices has been of interest in theoretical mathematics and physics communities, with the main question: are almost commuting matrices close to matrices that exactly commute? 
This question was answered positively for self-adjoint (Hermitian) matrices by Lin \cite{Huang_Lin}, and studied for additional different cases and settings \cite{Bernstein_Almost_commuting,Pearcy1979332,Rordam1996,Hastings2009,Filonov2010arXiv1008.4002F,Loring_Sorensen2010,Glebsky2010arXiv1002.3082G}.

In this paper, we study the relation between commutativity and joint diagonalizability of matrices: while it is well-known that commuting matrices are jointly diagonalizable, to the best of our knowledge, no results exist for almost-commuting matrices. 
Our result is that {\em almost commuting self-adjoint matrices are almost jointly  diagonalizable by a unitary matrix, and vice versa}, in a sense that will be explained later. 

Besides theoretical interest, this result has practical applications given the recent use of simultaneous approximate diagonalization of matrices in signal processing \cite{CardosoBlind1993,Cardoso_pertubation1994,Cardoso96jacobiangles}, machine learning \cite{2012arXiv1209.2295E}, and computer graphics \cite{KovBBKK:2013:EG}.  
In particular, Kovnatsky et al. \cite{KovBBKK:2013:EG} used joint diagonalizabiliy of Laplacian matrices as a criterion of similarity between 3D shapes (isometric shapes have jointly diagonalizable Laplacians). Since the joint diagonalization procedure is computationally expensive, the easily computable norm of the commutator can be used instead; our result justifies this use.

\section{Background} 

Let $\Aa, \Bb$ be two $n\times n$ complex  matrices. We denote by 
\begin{eqnarray*}
\| \Aa \|_\mathrm{F} &=& \textstyle \left( \sum_{ij}|a_{ij}|^2\right)^{1/2} = \left( \mathrm{tr}(\Aa^* \Aa) \right)^{1/2}; \\
\| \Aa \|_2 &=& \max_{\xx \in \RR^n : \| \xx\|_2 = 1}\| \Aa \xx \|_2 = \left( \lambda_{\mathrm{max}}(\Aa^*\Aa) \right)^{1/2}, 
\end{eqnarray*} 
the {\em Frobenius} and the {\em operator} norm (induced by the Euclidean vector norm) of $\Aa$, respectively. Here $\Aa^*$ is the adjoint (conjugate transpose) of $\Aa$.

We say that $\Aa, \Bb$ are {\em jointly diagonalizable} if there exists a unitary matrix $\Uu$ such that $\Uu^* \Aa \Uu = \Llambda_A$ and $\Uu^* \Bb \Uu = \Llambda_B$ are diagonal. 
In general, two matrices $\Aa, \Bb$ are not necessarily jointly diagonalizable, however, we can approximately diagonalize them by minimizing 
$$\min_{\Uu} J(\Aa,\Bb,\Uu)\,\,\, \mathrm{s.t.}\,\,\, \Uu^*\Uu = \Ii, $$
where 
$$J(\Aa,\Bb,\Uu) = \mathrm{off}(\Uu^* \Aa \Uu) + \mathrm{off}(\Uu^* \Bb \Uu),$$
and $\mathrm{off}(\Aa) = \sum_{i\neq j}|a_{ij}|^2$ is the sum of the squared absolute values of the off-diagonal elements. 
In the following, we denote $J(\Aa,\Bb) = \min_{\Uu^*\Uu = \Ii} J(\Aa,\Bb,\Uu)$.
Numerically, this optimization problem can be solved by a Jacobi-type iteration, referred to as the JADE algorithm \cite{Bunse-Gerstner:1993,Cardoso96jacobiangles}.

Furthermore, we say that $\Aa$ and $\Bb$ {\em commute} if $\Aa\Bb = \Bb\Aa$, and call $[\Aa,\Bb] = \Aa\Bb - \Bb\Aa$ their {\em commutator}. 
It is well-known that commuting self-adjoint matrices are jointly diagonalizable \cite{horn1990matrix}, which can be expressed as $\| [\Aa,\Bb]\|_\mathrm{F}=0$ iff $J(\Aa,\Bb) =0$. 
We are interested in extending this relation for the case $\| [\Aa,\Bb]\|_\mathrm{F}>0$ (respectively, $J(\Aa,\Bb) >0$). 

The main result of this paper is that if $\| [\Aa,\Bb] \|_\mathrm{F}$ is sufficiently small, then $J(\Aa,\Bb)$ is also small, and vice versa, i.e., almost commuting matrices are almost jointly diagonalizable.  
We can state this as the following

\begin{theo}[\bf main theorem] 
There exist functions $\epsilon_1(x), \epsilon_2(x)$ satisfying $\lim_{x\rightarrow 0} \epsilon_i (x) = 0$, $i=1,2$,  such that for any two self-adjoint $n\times n$ matrices $A,B$ with $\| A\|_\mathrm{F} = \| B\|_\mathrm{F} = 1$,
$$\epsilon_1( \| [A,B] \|_\mathrm{F} )  \leq J(A,B) \leq n\epsilon_2( \| [A,B] \|_\mathrm{F} ). $$ 
\end{theo}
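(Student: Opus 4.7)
The plan is to prove the two inequalities separately, since they are of very different flavors: the lower bound $\epsilon_1(\|[A,B]\|_\mathrm{F}) \leq J(A,B)$ is an elementary quantitative computation, while the upper bound $J(A,B) \leq n\,\epsilon_2(\|[A,B]\|_\mathrm{F})$ rests on Lin's theorem for almost-commuting self-adjoint matrices.

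For the lower bound I would take a unitary $U$ attaining $J(A,B)$ and decompose $U^{*}AU = D_A + N_A$ and $U^{*}BU = D_B + N_B$ into their diagonal and off-diagonal parts. By construction $\|N_A\|_\mathrm{F}^{2} + \|N_B\|_\mathrm{F}^{2} = J(A,B)$, and $\|D_A\|_2 \leq \|D_A\|_\mathrm{F} \leq \|A\|_\mathrm{F} = 1$ (similarly for $D_B$). Using unitary invariance of $\|\cdot\|_\mathrm{F}$, the identity $[D_A, D_B] = 0$, and the submultiplicative inequality $\|XY\|_\mathrm{F} \leq \|X\|_2\|Y\|_\mathrm{F}$, expanding the commutator gives
\begin{equation*}
\|[A,B]\|_\mathrm{F} \leq 2\bigl(\|N_A\|_\mathrm{F} + \|N_B\|_\mathrm{F}\bigr) + 2\|N_A\|_\mathrm{F}\|N_B\|_\mathrm{F} \leq 2\sqrt{2\,J(A,B)} + J(A,B),
\end{equation*}
which upon inversion produces an explicit $\epsilon_1$ behaving like $x^{2}/8$ near zero, independent of $n$.

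For the upper bound I would invoke Lin's theorem: there exists a \emph{dimension-independent} function $\phi$ with $\phi(x) \to 0$ as $x \to 0$ such that any self-adjoint matrices $A, B$ satisfying $\|A\|_2, \|B\|_2 \leq 1$ and $\|[A,B]\|_2 \leq x$ admit commuting self-adjoint approximants $A', B'$ with $\|A - A'\|_2, \|B - B'\|_2 \leq \phi(x)$. Since $\|\cdot\|_2 \leq \|\cdot\|_\mathrm{F}$, the normalization $\|A\|_\mathrm{F} = \|B\|_\mathrm{F} = 1$ supplies the hypotheses with $x = \|[A,B]\|_\mathrm{F}$. Let $U$ be a unitary that jointly diagonalizes $A'$ and $B'$. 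Because $U^{*}A'U$ is diagonal, the off-diagonal part of $U^{*}AU$ equals the off-diagonal part of $U^{*}(A-A')U$, so $\mathrm{off}(U^{*}AU) \leq \|A - A'\|_\mathrm{F}^{2} \leq n\,\|A - A'\|_2^{2} \leq n\,\phi(x)^{2}$, and analogously for $B$. Summing yields $J(A,B) \leq J(A,B,U) \leq 2n\,\phi(\|[A,B]\|_\mathrm{F})^{2}$, which proves the bound with $\epsilon_2(x) = 2\phi(x)^{2}$.

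The main obstacle is the upper bound: no elementary Frobenius-norm manipulation can deliver it, because the analogous statement fails in other matrix categories (famously for almost-commuting unitaries), so Lin's theorem must enter as a genuine input. Consequently $\epsilon_2$ must be accepted as an abstract function tending to zero rather than a concrete rate, since Lin's original $C^{*}$-algebraic proof is nonconstructive. The dimension factor $n$ appearing in the upper bound is purely an artifact of the conversion $\|\cdot\|_\mathrm{F} \leq \sqrt{n}\,\|\cdot\|_2$ used to transfer Lin's operator-norm estimate into a Frobenius-norm estimate, and could in principle be removed by formulating the whole result in the operator norm.
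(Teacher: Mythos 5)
Your proposal is correct, and both halves take genuinely different routes from the paper. For the lower bound, the paper groups the decomposition as $[\Aa,\Bb] = \Vv([\Dd_A+\Xx,\Yy]+[\Xx,\Dd_B])\Vv^*$ (so that $\|\Dd_A+\Xx\|_\mathrm{F}=\|\Aa\|_\mathrm{F}=1$ and no cross term $[\Xx,\Yy]$ survives) and then invokes the B\"ottcher--Wenzel inequality $\|[\Xx,\Yy]\|_\mathrm{F}\le\sqrt{2}\,\|\Xx\|_\mathrm{F}\|\Yy\|_\mathrm{F}$, obtaining the clean $\|[\Aa,\Bb]\|_\mathrm{F}\le 2\sqrt{J(\Aa,\Bb)}$ and hence $\epsilon_1(x)=x^2/4$. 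Your expansion keeps all three terms and replaces B\"ottcher--Wenzel by the elementary $\|\Xx\Yy\|_\mathrm{F}\le\|\Xx\|_2\|\Yy\|_\mathrm{F}$, which costs you the residual $J(A,B)$ term and a factor of $2$ in the asymptotic constant ($x^2/8$ versus $x^2/4$); the trade-off is that you avoid a nontrivial external inequality. For the upper bound, the paper proves and then uses a separate continuity lemma (Lemma~4.3) bounding $|J(\Aa,\Bb,\Uu)-J(\Cc,\Dd,\Uu)|$, arriving at $J(\Aa,\Bb)\le 2(2+\sqrt{n}\,\epsilon(\delta))\sqrt{n}\,\epsilon(\delta)$, which is linear in $\epsilon$ for small $\epsilon$. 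Your observation that $\mathrm{off}(\Uu^*\Aa\Uu)=\mathrm{off}(\Uu^*(\Aa-\Aa')\Uu)$ whenever $\Uu^*\Aa'\Uu$ is diagonal bypasses the lemma entirely and gives the sharper quadratic bound $J(\Aa,\Bb)\le 2n\,\phi(x)^2$; it is both shorter and tighter, though of course, as you note, still inherits the nonconstructive nature of Lin's $\epsilon(\delta)$ and the $n$-factor from the norm conversion $\|\cdot\|_\mathrm{F}\le\sqrt{n}\,\|\cdot\|_2$.
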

The lower bound is discussed in Section~3. We show that this bound is independent of $n$ and is tight. 
The upper bound is discussed in Section~4. Besides showing the existence of the bounds, we also state them explicitly.

\section{Lower bound}

\begin{theo}[\bf lower bound] Let $\Aa, \Bb$ be self-adjoint matrices such that $\| \Aa\|_\mathrm{F} = \| \Bb\|_\mathrm{F} = 1$. Then, 
$$\frac{1}{4} \| [\Aa,\Bb] \|_\mathrm{F}^2  \leq J(\Aa,\Bb).$$  
\end{theo}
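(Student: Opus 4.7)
The plan is to reduce the statement to the basis-dependent inequality $\|[A,B]\|_\mathrm{F}^2 \le 4(\mathrm{off}(A)+\mathrm{off}(B))$, valid for every self-adjoint pair with $\|A\|_\mathrm{F}=\|B\|_\mathrm{F}=1$, and then apply it to $\tilde A=U^*AU$ and $\tilde B=U^*BU$ for an arbitrary unitary $U$. Since conjugation preserves both the Frobenius norm and the commutator, the inequality would transform into $\tfrac14\|[A,B]\|_\mathrm{F}^2\le \mathrm{off}(\tilde A)+\mathrm{off}(\tilde B)=J(A,B,U)$, and minimising over $U$ would give the theorem.

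To establish the pointwise inequality I would decompose
\[
[A,B]\;=\;[A,B_{\mathrm{off}}]\;+\;[A_{\mathrm{off}},D_B],
\]
where $D_B$ is the diagonal and $B_{\mathrm{off}}=B-D_B$ the strict off-diagonal part of $B$ (and analogously $A_{\mathrm{off}}$). The rewrite is legitimate because $[D_A,D_B]=0$, so $[A,D_B]=[A_{\mathrm{off}},D_B]$; its virtue is that the quadratic cross term $[A_{\mathrm{off}},B_{\mathrm{off}}]$ never appears. For each summand I would invoke the standard commutator estimate $\|[X,Y]\|_\mathrm{F}\le(\lambda_\mathrm{max}(X)-\lambda_\mathrm{min}(X))\,\|Y\|_\mathrm{F}$ for Hermitian $X$ (proved by writing $[X,Y]=[X-cI,Y]$ with $c=\tfrac12(\lambda_\mathrm{max}+\lambda_\mathrm{min})$, so that $\|X-cI\|_2=\tfrac12(\lambda_\mathrm{max}-\lambda_\mathrm{min})$), combined with the sharp spread bound $\lambda_\mathrm{max}(X)-\lambda_\mathrm{min}(X)\le\sqrt2\,\|X\|_\mathrm{F}$, which follows from $(\lambda_\mathrm{max}-\lambda_\mathrm{min})^2\le 2(\lambda_\mathrm{max}^2+\lambda_\mathrm{min}^2)\le 2\|X\|_\mathrm{F}^2$.

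Applying the estimate with $X=A,\,Y=B_{\mathrm{off}}$ would yield $\|[A,B_{\mathrm{off}}]\|_\mathrm{F}\le\sqrt2\,\sqrt{\mathrm{off}(B)}$, and with $X=D_B,\,Y=A_{\mathrm{off}}$ (using $\|D_B\|_\mathrm{F}\le\|B\|_\mathrm{F}=1$) would yield $\|[A_{\mathrm{off}},D_B]\|_\mathrm{F}\le\sqrt2\,\sqrt{\mathrm{off}(A)}$; the triangle inequality followed by $(x+y)^2\le 2(x^2+y^2)$ would then deliver the claimed constant $4$. The main obstacle is keeping every factor of $\sqrt2$ sharp: weakening the spread bound to the naive $2\|X\|_2\le 2\|X\|_\mathrm{F}$, or switching to the standard three-term decomposition $[D_A,B_{\mathrm{off}}]+[A_{\mathrm{off}},D_B]+[A_{\mathrm{off}},B_{\mathrm{off}}]$ with its quadratic cross term, would degrade the constant to $8$ or worse, so both the choice of splitting and the spectral spread estimate are essential for landing at $1/4$.
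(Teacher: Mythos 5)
Your proof is correct, and it rests on the same two-term decomposition of the commutator that the paper uses: you write $[A,B]=[A,B_{\mathrm{off}}]+[A_{\mathrm{off}},D_B]$, which is exactly the paper's $[\Dd_A+\Xx,\Yy]+[\Xx,\Dd_B]$ in their notation (with $\Dd_A+\Xx$ being the conjugated $\Aa$). The one genuine difference is the commutator bound you apply to each summand. The paper invokes the B\"ottcher--Wenzel inequality $\|[X,Y]\|_\mathrm{F}\le\sqrt2\,\|X\|_\mathrm{F}\|Y\|_\mathrm{F}$, which holds for arbitrary square matrices but is a nontrivial theorem (conjectured in 2005, proved in full generality only later). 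You instead derive a sufficient Hermitian special case from first principles: the shift $[X,Y]=[X-cI,Y]$ with $c$ the midpoint of the spectrum gives $\|[X,Y]\|_\mathrm{F}\le(\lambda_{\max}-\lambda_{\min})\|Y\|_\mathrm{F}$ via $\|XY\|_\mathrm{F}\le\|X\|_2\|Y\|_\mathrm{F}$, and then the elementary spread bound $\lambda_{\max}-\lambda_{\min}\le\sqrt2\,\|X\|_\mathrm{F}$ recovers the same $\sqrt2$ factor. Since in this application the ``$X$'' slot is always Hermitian ($A$ itself, and the real diagonal $D_B$), your argument suffices and makes the proof self-contained; the paper's citation buys nothing extra here. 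Your framing as a basis-dependent inequality valid for every unitary $U$, minimised only at the end, is also cosmetically cleaner than the paper's, which fixes $U=V$ to be the minimiser from the start, but mathematically the two are the same. Your closing observation — that the three-term splitting with the cross term $[A_{\mathrm{off}},B_{\mathrm{off}}]$ would degrade the constant — is correct and is precisely why the paper, too, folds the full conjugated $A$ into the first bracket rather than just $D_A$.
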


\begin{proof} Let us denote the minimizer   $\Vv = \mathop{\mathrm{argmin}}_{\Uu^*\Uu = \Ii} J(\Aa,\Bb,\Uu)$, and decompose 
\begin{eqnarray}
\label{eq:lower1}
\Vv^* \Aa \Vv &=& \Dd_A + \Xx; \\
\Vv^* \Bb \Vv &=& \Dd_B + \Yy,\nonumber
\end{eqnarray}
Here $\Dd_A, \Dd_B$ are diagonal matrices, and $\Xx=\Vv^* \Aa \Vv-\Dd_A, \Yy=\Vv^* \Bb \Vv-\Dd_B$ have zeroes on their diagonal.
This implies that $J(\Aa,\Bb) = \| \Xx \|_\mathrm{F}^2 + \| \Yy \|_\mathrm{F}^2$. 
Since $\| \Vv^* \Aa \Vv \|_\mathrm{F}^2 = \| \Dd_A\|_\mathrm{F}^2 + \| \Xx \|_\mathrm{F}^2$ and 
$\| \Vv^* \Bb \Vv \|_\mathrm{F}^2 = \| \Dd_B\|_\mathrm{F}^2 + \| \Yy \|_\mathrm{F}^2$, and using the invariance of the Frobenius norm to a unitary transformation, we get 
\begin{eqnarray}
\label{eq:lower2}
\| \Dd_A\|_\mathrm{F}^2 = \| \Vv^* \Aa \Vv \|_\mathrm{F}^2 - \| \Xx \|_\mathrm{F}^2 = \| \Aa \|_\mathrm{F}^2 - \| \Xx \|_\mathrm{F}^2 \leq \| \Aa \|_\mathrm{F}^2 \leq 1; 
\end{eqnarray}
in the same way, we establish that $ \| \Dd_B\|_\mathrm{F}^2 \leq 1$.

Rewriting~(\ref{eq:lower1}) as $\Aa = \Vv \Dd_A \Vv^* + \Vv  \Xx \Vv^*$ and 
$\Bb = \Vv \Dd_B \Vv^* + \Vv \Yy \Vv^*$, we get 
\begin{eqnarray*}
\Aa\Bb &=& \Vv \Dd_A \Vv^*  \Vv \Dd_B \Vv^*  +    \Vv \Dd_A \Vv^*  \Vv  \Yy \Vv^* + \Vv  \Xx \Vv^*  \Vv \Dd_B \Vv^* + \Vv  \Xx \Vv^* \Vv  \Yy \Vv^* \\
&=& \Vv \Dd_A  \Dd_B \Vv^*  +    \Vv \Dd_A  \Yy \Vv^* + \Vv  \Xx  \Dd_B \Vv^* + \Vv  \Xx  \Yy \Vv^*
\end{eqnarray*}
and 
\begin{eqnarray*}
\Aa\Bb &=& \Vv \Dd_B  \Dd_A \Vv^*  +    \Vv \Dd_B  \Xx \Vv^* + \Vv  \Yy  \Dd_A \Vv^* + \Vv  \Yy  \Xx \Vv^*. 
\end{eqnarray*}
Thus, we can express 
$$[\Aa,\Bb] = \Aa\Bb - \Bb\Aa = \Vv([\Dd_A,\Dd_B] + [\Dd_A,\Yy] + [\Xx,\Dd_B] + [\Xx,\Yy])\Vv^*;$$
since $\Dd_A, \Dd_B$ are diagonal, $[\Dd_A,\Dd_B]=0$, and we have 
$$[\Aa,\Bb] = \Vv([\Dd_A,\Yy] + [\Xx,\Dd_B] + [\Xx,\Yy])\Vv^* = \Vv([\Dd_A + \Xx,\Yy] + [\Xx,\Dd_B] )\Vv^*,$$
and finally, by the triangle inequality and the invariance of $\|\cdot\|$ with respect to unitary transformations
$$\|[\Aa,\Bb]\|_\mathrm{F} \leq \|[\Dd_A + \Xx,\Yy]\|_\mathrm{F} + \|[\Xx,\Dd_B]\|_\mathrm{F}.$$

Next, we use the bound of of B{\"o}ttcher and Wenzel\footnote{This bound was conjectured by B{\"o}ttcher and Wenzel \cite{Boettcher2005216}  for real square matrices, and proved later for different settings in \cite{vong2008proof,Bottcher20081864,lu2011normal,Lu20122531}. } $\| [\Aa,\Bb]\|_\mathrm{F}^2 \leq 2\|\Aa\|_\mathrm{F} \|\Bb\|_\mathrm{F}$  together with~(\ref{eq:lower1}) and (\ref{eq:lower2}) to get
\begin{eqnarray*}
\|[\Dd_A + \Xx,\Yy]\|_\mathrm{F} &\leq& \sqrt{2}\|\Dd_A + \Xx\|_\mathrm{F} \|\Yy\|_\mathrm{F}  
= \sqrt{2}\|\Aa\|_\mathrm{F} \|\Yy\|_\mathrm{F} \leq \sqrt{2} \|\Yy\|_\mathrm{F}; \\
\|[\Xx, \Dd_B]\|_\mathrm{F} &\leq& \sqrt{2}\|\Dd_B \|_\mathrm{F} \|\Xx\|_\mathrm{F}  \leq  
\sqrt{2}\|\Xx\|_\mathrm{F}. 
\end{eqnarray*}
This implies
\begin{eqnarray*}
\|[\Aa,\Bb]\|_\mathrm{F} &\leq& \|[\Dd_A + \Xx, \Yy]\|_\mathrm{F} + \|[\Xx,\Dd_B]\|_\mathrm{F}  
\leq \sqrt{2}( \|\Xx\|_\mathrm{F} + \|\Yy\|_\mathrm{F} ) \\
&\leq& \sqrt{2} (2\|\Xx\|^2_\mathrm{F} + 2\|\Yy\|^2_\mathrm{F})^{1/2} = 2 J^{1/2}(\Aa,\Bb), 
\end{eqnarray*}
which proves the theorem. 
\end{proof}

\begin{remark}
The bound is tight, which can be seen by considering the $2\times 2$ matrices 
\begin{center}
$\Aa_2=\left(\begin{array}{cc}
0.5 & 0.5\\
0.5 & -0.5
\end{array}\right), \,\,\,\,\,
\hat{\Bb_2}=\left(\begin{array}{cc}
-0.5 & -0.5+\epsilon\\
-0.5+\epsilon & 0.5
\end{array}\right),$
\end{center}
$\Bb_2:=\hat{\Bb}_2/\|\Bb_2\|$ for $\epsilon\rightarrow 0$.  
This example extends to any dimension $n>2$ by defining $n\times n$ matrices 
\begin{center}
$\Aa_n=\left(\begin{array}{cc}
\Aa_2 & 0\\
0 & 0
\end{array}\right),\,\,\,\,\,
\Bb_n=\left(\begin{array}{cc}
\Bb_2& 0\\
0 & 0
\end{array}\right).$

\end{center}

\end{remark}

\begin{figure}
\center{
  \includegraphics[width=1\linewidth]{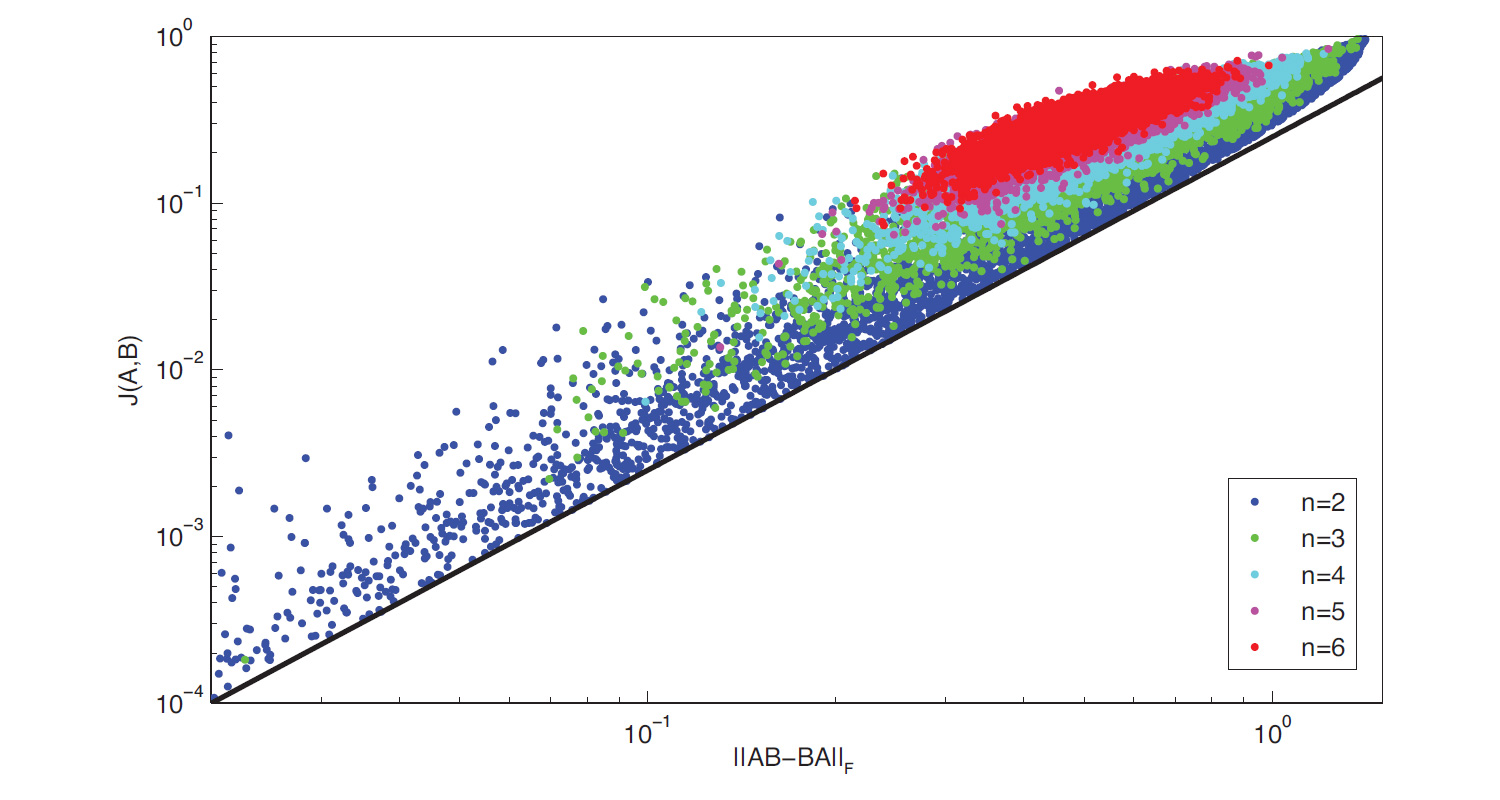}

  \caption{\label{fig:bounds} \small Visualization of the bounds for $100$ real symmetric $n\times n$ matrices drawn uniformly on the unit sphere for different values of $n$.  Lower bound is shown in black line. }
}
\end{figure}

\section{Upper bound}
\begin{theo}
There exists a function $\hat{\epsilon}(\delta)$ satisfying $\lim_{\delta \rightarrow 0}\hat{\epsilon}(\delta) = 0$ with the following property: If $A, B$ are two self-adjoint $n\times n$ matrices satisfying $\| A\|_2, \| B\|_2 \leq 1$, and $\| [A,B]\|_2 \leq \delta$, then 
$$J(\Aa,\Bb) \leq n\hat{ \epsilon}(\delta)$$. 
\end{theo}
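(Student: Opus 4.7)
The plan is to reduce the statement to Lin's theorem on almost commuting self-adjoint matrices (cited in the paper as Huang--Lin). That theorem provides, for every $\delta > 0$, a function $f(\delta)$ with $\lim_{\delta \to 0} f(\delta) = 0$ such that whenever $A,B$ are self-adjoint with $\|A\|_2, \|B\|_2 \leq 1$ and $\|[A,B]\|_2 \leq \delta$, one can find \emph{exactly} commuting self-adjoint matrices $A', B'$ with $\|A - A'\|_2, \|B - B'\|_2 \leq f(\delta)$. The crucial feature is that $f(\delta)$ is independent of the dimension $n$; this independence is precisely Lin's deep contribution, and it is the only nonelementary ingredient I plan to invoke.

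Given such $A', B'$, the standard spectral theorem for commuting self-adjoint matrices yields a unitary $U$ that simultaneously diagonalizes them: $U^* A' U = \Lambda_{A'}$ and $U^* B' U = \Lambda_{B'}$ are diagonal. I would then use this specific $U$ as a feasible point in the minimization defining $J(A,B)$. The key elementary observation is that for any matrix $M$ and any diagonal matrix $D$, one has $\mathrm{off}(M) \leq \|M - D\|_\mathrm{F}^2$, since the diagonal of $M$ is the best diagonal approximation of $M$ in the Frobenius norm. Applying this with $M = U^* A U$ and $D = \Lambda_{A'}$ gives
\begin{equation*}
\mathrm{off}(U^* A U) \;\leq\; \|U^*(A - A')U\|_\mathrm{F}^2 \;=\; \|A - A'\|_\mathrm{F}^2 \;\leq\; n\,\|A - A'\|_2^2 \;\leq\; n f(\delta)^2,
\end{equation*}
where I used unitary invariance of the Frobenius norm and the elementary bound $\|\cdot\|_\mathrm{F} \leq \sqrt{n}\,\|\cdot\|_2$. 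The same estimate applies to $B$.

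Summing the two estimates yields
\begin{equation*}
J(A,B) \;\leq\; J(A,B,U) \;=\; \mathrm{off}(U^* A U) + \mathrm{off}(U^* B U) \;\leq\; 2 n f(\delta)^2,
\end{equation*}
so the theorem holds with $\hat{\epsilon}(\delta) := 2 f(\delta)^2$, which tends to $0$ as $\delta \to 0$ because $f$ does.

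The only real obstacle is intellectual rather than computational: one must either quote Lin's theorem as a black box or accept an $n$-dependent bound. If one instead used an elementary perturbation argument (simultaneously diagonalizing $A$ approximately via its own eigenbasis and bounding the off-diagonal part of $B$ in that basis), the resulting bound would involve spectral gaps of $A$ and would degrade badly with $n$, so Lin's dimension-free estimate is what makes this clean. It is worth noting that the factor $\sqrt{n}$ from the $\|\cdot\|_\mathrm{F} \leq \sqrt{n}\|\cdot\|_2$ step is where the dimension factor on the right-hand side of the theorem enters, and since Lin's $f$ is already dimension-free, no further $n$-dependence creeps in. An explicit form of $\hat{\epsilon}$ can be read off from any explicit version of Lin's bound in the literature, which presumably is what the authors do in stating their bound explicitly.
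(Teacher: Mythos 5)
Your proof is correct, and it takes a cleaner route than the paper at the key step. Both proofs begin identically: invoke Lin's theorem to produce commuting $\Aa',\Bb'$ with $\|\Aa-\Aa'\|_2,\|\Bb-\Bb'\|_2\le\epsilon(\delta)$, take a unitary $\Uu$ simultaneously diagonalizing $\Aa',\Bb'$, and plug this $\Uu$ into the objective as a feasible point. The divergence is in how $J(\Aa,\Bb,\Uu)$ is bounded. The paper proves a separate Lemma (their Lemma~4.3), a Lipschitz-type estimate
$$|J(\Aa,\Bb,\Uu)-J(\Cc,\Dd,\Uu)|\le \|\Aa+\Cc\|_\mathrm{F}\|\Aa-\Cc\|_\mathrm{F}+\|\Bb+\Dd\|_\mathrm{F}\|\Bb-\Dd\|_\mathrm{F},$$
established via a trace identity and Cauchy--Schwarz, and then applies it with $\Cc=\Aa',\Dd=\Bb'$, using $J(\Aa',\Bb',\Uu)=0$. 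This forces them to control the extra factors $\|\Aa+\Aa'\|_\mathrm{F}$, $\|\Bb+\Bb'\|_\mathrm{F}$ by a triangle inequality, leading to $\hat\epsilon(\delta)=2(\sqrt2+\epsilon(\delta))\epsilon(\delta)$. You instead use the elementary one-sided observation that the diagonal of a matrix is its best diagonal approximation in Frobenius norm, so that $\mathrm{off}(\Uu^*\Aa\Uu)\le\|\Uu^*\Aa\Uu-\Llambda_{\Aa'}\|_\mathrm{F}^2=\|\Aa-\Aa'\|_\mathrm{F}^2$. This bypasses Lemma~4.3 entirely, needs no control of $\|\Aa+\Aa'\|_\mathrm{F}$, and yields the tighter (quadratic-in-$\epsilon$) constant $\hat\epsilon(\delta)=2\epsilon(\delta)^2$. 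Both satisfy $\lim_{\delta\to0}\hat\epsilon(\delta)=0$, so either proves the theorem; yours is shorter and gives a sharper explicit function, essentially because the paper's lemma is a two-sided bound while only a one-sided bound at a point where the comparison value vanishes is needed.
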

In the proof of Theorem~4.1, we will use the following two auxiliary results. 
The first result is Huaxin Lin's theorem, asserting that almost commuting matrices are close to commuting matrices: 


\begin{theo}[\bf Lin 1995] 
There exists a function $\epsilon(\delta)$ satisfying $\lim_{\delta \rightarrow 0}\epsilon(\delta) = 0$ with the following property: If $A, B$ are two self-adjoint $n\times n$ matrices satisfying $\| A\|_2, \| B\|_2 \leq 1$, and $\| [A,B]\|_2 \leq \delta$, then there exists a pair $A', B'$ of \emph{commuting matrices}  satisfying $\| A - A' \|_2 \leq \epsilon(\delta)$ and $\| B - B' \|_2 \leq \epsilon(\delta)$.
\end{theo}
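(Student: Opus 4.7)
The plan is to argue by contradiction, using a C*-algebra ultraproduct to strip away the dependence on the dimension $n$ and convert the problem into a lifting statement for a commutative C*-subalgebra of a quotient algebra. If the theorem fails, then there exist $c>0$ and a sequence of self-adjoint matrix pairs $(A_k,B_k)\in M_{n_k}(\mathbb{C})^2$ with $\|A_k\|_2,\|B_k\|_2\le 1$ and $\|[A_k,B_k]\|_2\to 0$, such that no commuting self-adjoint pair $(A'_k,B'_k)$ in $M_{n_k}(\mathbb{C})$ is within operator-norm distance $c$ of $(A_k,B_k)$ for infinitely many $k$.

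Fix a free ultrafilter $\omega$ on $\mathbb{N}$ and form the ultraproduct C*-algebra $\mathcal{N}=\prod_\omega M_{n_k}(\mathbb{C})$, i.e.\ bounded sequences modulo those whose operator norm tends to zero along $\omega$. The classes $\bar A,\bar B$ of $(A_k),(B_k)$ are self-adjoint contractions in $\mathcal{N}$, and $[\bar A,\bar B]=0$ by the commutator hypothesis, so they generate a commutative unital C*-subalgebra isomorphic to $C(X)$ for some compact $X\subset[-1,1]^2$. The goal is then to lift the induced $*$-representation $C(X)\to\mathcal{N}$ to a sequence of $*$-homomorphisms $C(X)\to M_{n_k}(\mathbb{C})$, which would yield exactly commuting pairs $(A'_k,B'_k)$ with $\|A_k-A'_k\|_2,\|B_k-B'_k\|_2\to 0$ along $\omega$, contradicting the standing assumption.

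The main obstacle, and the true content of the theorem, is precisely this lifting step: it amounts to proving that $C(X)$ for compact $X\subset\mathbb{R}^2$ is semiprojective with respect to finite-dimensional matrix algebras, i.e.\ that approximate $*$-representations can be perturbed by a small amount into exact ones. The analogous statement fails in higher dimensions -- Voiculescu's almost-commuting triples of unitaries carry a nontrivial $K$-theoretic (Bott) obstruction -- so the argument must genuinely exploit the planar structure of the joint spectrum of two self-adjoint contractions, for example via an approximate functional calculus and a winding-number computation to rule out the two-dimensional analogue of such an obstruction. A more elementary-looking alternative is to diagonalize $A$, cluster its spectrum at a carefully chosen scale $\sigma=\sigma(\delta)$, truncate the off-cluster blocks of $B$, and collapse each cluster of $A$'s eigenvalues to a common value; the difficulty there is ensuring that the resulting perturbation is controlled uniformly in $n$, which forces a recursive multi-scale cluster construction rather than a single-scale argument, and ultimately runs into the same $K$-theoretic issue in disguise.
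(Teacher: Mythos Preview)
The paper does not prove this theorem at all; it states Lin's result and immediately refers the reader to the literature (\cite{Huang_Lin,Rordam1996} for the complex Hermitian case, \cite{Loring_Sorensen2010} for the real symmetric case). So there is no ``paper's own proof'' to compare against, and the theorem is used here as a black box in the proof of Theorem~4.1.

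As for your proposal itself: the ultraproduct reduction is correct and is indeed the standard opening move in the Friis--R{\o}rdam simplification of Lin's argument. You pass to $\mathcal{N}=\prod_\omega M_{n_k}$, observe that the images $\bar A,\bar B$ commute, and reduce the problem to lifting a $*$-homomorphism $C(X)\to\mathcal{N}$ through the quotient map $\prod M_{n_k}\to\mathcal{N}$. Your identification of this lifting as the crux is accurate, as is your remark that the analogous statement for three self-adjoints (or two unitaries) fails because of a $K$-theoretic obstruction.

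The gap is that your write-up stops exactly where the real work begins. Everything up to ``The main obstacle, and the true content of the theorem, is precisely this lifting step'' is a routine compactness reduction; the remaining two paragraphs are a survey of what a proof \emph{would} need to do (planar functional calculus, absence of a Bott-type obstruction, or a multi-scale spectral clustering) rather than a proof. In particular, you never establish the semiprojectivity of $C(X)$ for $X\subset\mathbb{R}^2$ with respect to the relevant quotient, and you correctly note that the naive single-scale clustering argument does not give dimension-free bounds. So what you have is a sound outline of the Lin/Friis--R{\o}rdam strategy, not a proof; since the paper itself only cites the result, that may be all that is required here, but it should be labeled as a sketch rather than a proof.
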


For a proof for the complex Hermitian case, we refer the reader to \cite{Huang_Lin,Rordam1996}. The first proof for the real case of symmetric matrices was given by Loring and S{\o}rensen \cite{Loring_Sorensen2010}.
The second result is the following property of the function $J$:

\begin{lem}
Let $\Aa, \Bb, \Cc, \Dd$ be self-adjoint $n\times n$ matrices, and let $\Uu$ denote a $n\times n$ unitary matrix. Then, 
$$|J(\Aa,\Bb,\Uu) - J(\Cc,\Dd,\Uu)| \leq  \|\Aa + \Cc\|_\mathrm{F} \|\Aa - \Cc\|_\mathrm{F} + 
\|\Bb + \Dd\|_\mathrm{F} \|\Bb - \Dd\|_\mathrm{F}.$$ 
\end{lem}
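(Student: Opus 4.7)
The plan is to reduce the claim to a single-matrix estimate and then exploit the bilinearity of the difference of squared Frobenius norms. Since
$$J(\Aa,\Bb,\Uu)-J(\Cc,\Dd,\Uu) = \bigl[\mathrm{off}(\Uu^*\Aa\Uu)-\mathrm{off}(\Uu^*\Cc\Uu)\bigr] + \bigl[\mathrm{off}(\Uu^*\Bb\Uu)-\mathrm{off}(\Uu^*\Dd\Uu)\bigr]$$
splits by linearity, the triangle inequality reduces the lemma to proving the single-pair bound $|\mathrm{off}(\Uu^*\Aa\Uu)-\mathrm{off}(\Uu^*\Cc\Uu)| \leq \|\Aa+\Cc\|_\mathrm{F}\,\|\Aa-\Cc\|_\mathrm{F}$ (and the analogous inequality for $\Bb, \Dd$).

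The main observation I would use is that $\mathrm{off}(\cdot)$ is a squared Frobenius distance to the subspace of diagonal matrices. Writing $D(M)$ for the diagonal of $M$, the map $M\mapsto M - D(M)$ is an orthogonal projection with respect to the Frobenius inner product $\langle X,Y\rangle_\mathrm{F} = \mathrm{tr}(X^*Y)$, so $\mathrm{off}(M) = \|M - D(M)\|_\mathrm{F}^2$. Setting $P = \Uu^*\Aa\Uu - D(\Uu^*\Aa\Uu)$ and $Q = \Uu^*\Cc\Uu - D(\Uu^*\Cc\Uu)$, I would apply the polarization-type identity $\|P\|_\mathrm{F}^2 - \|Q\|_\mathrm{F}^2 = \mathrm{Re}\,\langle P+Q,\, P-Q\rangle_\mathrm{F}$; this holds in any complex inner product space because the skew-Hermitian cross-term $\langle Q,P\rangle - \langle P,Q\rangle$ is purely imaginary and so vanishes under $\mathrm{Re}$. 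A single application of Cauchy--Schwarz then gives $|\,\|P\|_\mathrm{F}^2 - \|Q\|_\mathrm{F}^2\,| \leq \|P+Q\|_\mathrm{F}\,\|P-Q\|_\mathrm{F}$.

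It remains to convert $\|P \pm Q\|_\mathrm{F}$ into expressions involving $\Aa \pm \Cc$. By linearity of the diagonal extraction, $P + Q$ equals the off-diagonal part of $\Uu^*(\Aa+\Cc)\Uu$, and since orthogonal projections are nonexpansive in the Frobenius norm, $\|P+Q\|_\mathrm{F} \leq \|\Uu^*(\Aa+\Cc)\Uu\|_\mathrm{F}$; unitary invariance of $\|\cdot\|_\mathrm{F}$ then turns this into $\|\Aa+\Cc\|_\mathrm{F}$. The identical argument applied to $P - Q$ yields $\|P-Q\|_\mathrm{F} \leq \|\Aa-\Cc\|_\mathrm{F}$, proving the single-pair bound and hence, after the initial splitting, the lemma. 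The argument is a straightforward chain of elementary facts (orthogonal projection nonexpansivity, polarization identity, Cauchy--Schwarz, unitary invariance), and I do not anticipate any genuine obstacle; I note in passing that the self-adjointness hypothesis is not actually used in this step, so the statement in fact holds for arbitrary square matrices.
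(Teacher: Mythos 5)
Your proof is correct and follows essentially the same route as the paper's: both reduce to a single-matrix estimate, use a difference-of-squares/polarization step, then apply Cauchy--Schwarz, nonexpansivity of the off-diagonal projection (the Hadamard mask $\Mm$ in the paper), and unitary invariance of the Frobenius norm. Your version is slightly cleaner in one respect: the paper's identity $\mathrm{tr}\bigl((\Xx+\Yy)^*(\Xx-\Yy)\bigr)=\mathrm{tr}(\Xx^*\Xx-\Yy^*\Yy)$ is valid there only because the matrices involved are Hermitian (so the cross-term $\mathrm{tr}(\Yy^*\Xx-\Xx^*\Yy)$ vanishes rather than being merely imaginary), whereas your explicit use of $\mathrm{Re}\,\langle P+Q,\,P-Q\rangle$ makes the argument manifestly valid for arbitrary square matrices, as you correctly observe.
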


\begin{proof}

For notational convenience, let us define $J(\Aa,\Uu) = \mathrm{off}(\Uu^* \Aa \Uu)$, such that $J(\Aa,\Bb,\Uu) = J(\Aa,\Uu) + J(\Bb,\Uu)$. 
We can also express 
$$J(\Aa,\Uu) = \mathrm{tr}( (\Mm \circ (\Uu^* \Aa \Uu))^* (\Mm \circ (\Uu^* \Aa \Uu)) ),$$
where $\Mm$ is a matrix with elements $m_{ij} = 1 - \delta_{ij}$ and $\circ$ denotes the Hadamard (element-wise) matrix product. 
Using the relation $\mathrm{tr}((\Xx+\Yy)^*(\Xx-\Yy)) = \mathrm{tr}(\Xx^*\Xx - \Yy^*\Yy)$, we have
\begin{eqnarray*}
\begin{split}
|&J(\Aa,\Uu) - J(\Cc,\Uu)| =\\ 
&=|\mathrm{tr}( (\Mm \circ (\Uu^* \Aa \Uu))^* (\Mm \circ (\Uu^* \Aa \Uu)) ) - \mathrm{tr}( (\Mm \circ (\Uu^* \Cc \Uu))^* (\Mm \circ (\Uu^* \Cc \Uu)) ) | \\
&=|\mathrm{tr}( 
(\Mm \circ (\Uu^* \Aa \Uu) + \Mm \circ (\Uu^* \Cc \Uu) )^* 
(\Mm \circ (\Uu^* \Aa \Uu) - \Mm \circ (\Uu^* \Cc \Uu) )
) | \\
&=|\mathrm{tr}( 
(\Mm \circ (\Uu^* (\Aa+\Cc) \Uu ))^* 
(\Mm \circ (\Uu^* (\Aa-\Cc) \Uu ))
) |. 
\end{split}
%
\end{eqnarray*}
Employing the Cauchy-Schwartz inequality $|\mathrm{tr}(\Xx^*\Yy)| \leq \| \Xx\|_\mathrm{F}\| \Yy\|_\mathrm{F}$, we get 
\begin{eqnarray*}
|J(\Aa,\Uu) - J(\Cc,\Uu)| &=& |\mathrm{tr}( 
(\Mm \circ (\Uu^* (\Aa+\Cc) \Uu ))^* 
(\Mm \circ (\Uu^* (\Aa-\Cc) \Uu ))
) | \\
&\leq& 
\| \Mm \circ (\Uu^* (\Aa+\Cc) \Uu ) \|_\mathrm{F}\,
\| \Mm \circ (\Uu^* (\Aa-\Cc) \Uu ) \|_\mathrm{F}\\
&\leq& 
\| \Uu^* (\Aa+\Cc) \Uu \|_\mathrm{F}\,
\| \Uu^* (\Aa-\Cc) \Uu \|_\mathrm{F}\\
&=& 
\| \Aa+\Cc \|_\mathrm{F}\,
\| \Aa-\Cc \|_\mathrm{F}.
\end{eqnarray*}
By the same argument, 
$|J(\Bb,\Uu) - J(\Dd,\Uu)| \leq \| \Bb+\Dd \|_\mathrm{F}\,\| \Bb-\Dd \|_\mathrm{F}.$

\noindent Finally, 
\begin{eqnarray*}
|J(\Aa,\Bb,\Uu) - J(\Cc,\Dd,\Uu)| &=& 
|J(\Aa,\Uu) + J(\Cc,\Uu) - J(\Bb,\Uu) - J(\Dd,\Uu)|  \\
&\leq& 
|J(\Aa,\Uu) - J(\Cc,\Uu)| + |J(\Bb,\Uu) - J(\Dd,\Uu)| \\
&\leq& 
\| \Aa+\Cc \|_\mathrm{F}\,\| \Aa-\Cc \|_\mathrm{F} + 
\| \Bb+\Dd \|_\mathrm{F}\,\| \Bb-\Dd \|_\mathrm{F},
\end{eqnarray*}
which completes the proof of the lemma. 
\end{proof} 

We now state the proof of our upper bound: 
\begin{proof}[Proof of Theorem 4.1]

Let $\|[\Aa,\Bb]\|_\mathrm{F}\le \delta$ which implies $\|[\Aa,\Bb]\|_2\le \delta$, and $\|\Aa\|_2\le \|\Aa\|_\mathrm{F}\le 1, \|\Bb\|_2\le \|\Bb\|_\mathrm{F}\le 1$. By Lin's theorem, there are commuting matrices $\Aa',\Bb'$ such that 
$\| \Aa - \Aa' \|_\mathrm{F} \leq \sqrt{n}\| \Aa - \Aa' \|_2 \leq \sqrt{n}\epsilon(\delta)$.

Since $\Aa', \Bb'$ commute, they are jointly diagonalizable, implying that $J(\Aa',\Bb') = 0$, and that there exists a common diagonalizing matrix $\Ww' = \mathop{\mathrm{argmin}}_{\Uu^*\Uu = \Ii} J(\Aa',\Bb',\Uu)$. 
Applying Lemma 4.3, we get 
\begin{eqnarray*}
J(\Aa,\Bb) &=& J(\Aa,\Bb) - J(\Aa',\Bb') \leq J(\Aa,\Bb,\Ww') - J(\Aa',\Bb',\Ww')\\
&\leq& \|\Aa+\Aa'\|_\mathrm{F}\|\Aa-\Aa'\|_\mathrm{F} + 
\|\Bb+\Bb'\|_\mathrm{F}\|\Bb-\Bb'\|_\mathrm{F} \\
&\le&\|\Aa+\Aa+(\Aa'-\Aa)\|_\mathrm{F}\|\Aa-\Aa'\|_\mathrm{F} \\
& &+\|\Bb+\Bb+(\Bb'-\Bb)\|_\mathrm{F}\|\Bb-\Bb'\|_\mathrm{F} \\
&\leq& (2+\sqrt{n}\epsilon(\delta))\|\Aa-\Aa'\|_\mathrm{F} +(2+\sqrt{n}\epsilon(\delta))\|\Bb-\Bb'\|_\mathrm{F}\\
& \leq& 2(2+\sqrt{n}\epsilon(\delta))\sqrt{n}\epsilon(\delta).
\end{eqnarray*}
Now  $2(2+\sqrt{n}\epsilon(\delta))\sqrt{n}\epsilon(\delta)\le2 n(2/\sqrt{n}+\epsilon(\delta))\epsilon(\delta) \le 2 n(\sqrt{2}+\epsilon(\delta))\epsilon(\delta) =n\hat{\epsilon}(\delta)$ where we defined $\hat{\epsilon}(\delta)=2(\sqrt{2}+\epsilon(\delta))\epsilon(\delta)$, satisfying $\lim_{\delta \rightarrow 0}\hat{\epsilon}(\delta) = 0$ which finishes the proof of the theorem.

\end{proof}

\begin{remark}
The drawback of our Theorem~4.1 is  that it does not provide an explicit bound on $J(\Aa,\Bb)$ in terms of $\|\Aa,\Bb \|_\mathrm{F}$, but rather proves asymptotic behavior allowing to conclude that if two matrices almost commute, they are also almost jointly diagonalizable. 
In order to obtain an $\emph{explicit}$ bound, one can resort to different, more `constructive' alternatives to Lin's theorem:
%
\end{remark}

\begin{enumerate}[leftmargin=0cm,itemindent=.5cm,labelwidth=\itemindent,labelsep=0cm,align=left]

\item Hastings \cite{Hastings2009} showed that $\epsilon(\delta) = E(\delta^{-1})\delta^{1/5}$, where $E(x)$ is a function independent on $n$ that grows slower than any power of $x$,  
without, however, specifying the function $E$ explicitly.


\item There are different results \cite{Pearcy1979332,Glebsky2010arXiv1002.3082G,Filonov2010arXiv1008.4002F}, 
which, under the assumptions of Theorem 4.1, allow to calculate positive constants $c>0, \frac{1}{2} \le p,q\le 1$ such that if $\|[\Aa,\Bb]\|_F \leq \delta$, then 
\begin{equation}
\|\Aa-\Aa'\|_\mathrm{F}, \|\Bb-\Bb'\|_\mathrm{F}\le cn^{p}\delta^{q}.
\end{equation}
 By means of the arguments used for the proof of Theorem 4.1, together with the B\"{o}ttcher-Wenzel  bound $\delta\le \sqrt{2}$ \cite{Boettcher2005216} and the fact that $\frac{2}{n^p}\le \sqrt{2}$ for $\frac{1}{2}\le p\le 1,n\ge 2,$ this leads to the bound
\begin{eqnarray*}
J(\Aa,\Bb) &\leq&2n^{2p}(\nicefrac{2}{n^p}+c\delta^q)c\delta^q\\
&\le&2n^{2p}(\sqrt{2}+c\sqrt{2})c\delta^q\\
&\le& Cn^{2p} \|[\Aa,\Bb]\|_\mathrm{F}^{q}
\end{eqnarray*}
with $C=2\sqrt{2}(c+1)c $.
 For example,
Pearcy and Shields \cite{Pearcy1979332}\footnote{Pearcy and Shields use the operator norm $\|\cdot\|_2$ in the derivation of their bound, so the relation $\|\cdot\|_F\le \sqrt{n}\|\cdot\|_2$ has to be taken into account.  } obtained $c=\frac{1}{\sqrt{2}}, p = \frac{1}{4}, q = \frac{1}{2}$, 
 Glebsky \cite{Glebsky2010arXiv1002.3082G} $c=12, p = \frac{5}{12}, q = \frac{1}{6}$, and 
 Filonov and Kachkovskiy\footnote{In \cite{Filonov2010arXiv1008.4002F,Glebsky2010arXiv1002.3082G}  instead of the Frobenius norm the authors use the \emph{normalized} Frobenius norm $\|\cdot\|_{tr}=\frac{1}{\sqrt{n}}\|\cdot\|_\mathrm{F}$, so the assumptions and the assertion have to be adjusted accordingly.   }   \cite{Filonov2010arXiv1008.4002F} $c=2, p=\frac{3}{8}, q = \frac{1}{4}$. 

%
%
\end{enumerate}

\begin{remark}
We observed that none of the upper bounds derived by these theorems  lead to realistic values which are useful for numerical computations, so we do not discuss these results here in detail, and we leave this subject for further research.
\end{remark}

\section{Acknowledgement}
We thank David Wenzel and Terry Loring for pointing out some errors in a previous version.


\bibliographystyle{plain}\small
\bibliography{/Users/klausglashoff/Documents/OwnTexts/BIB/Spectral}

\end{document}